\newcommand{\Rm}{\mathbb{R}}
\newcommand{\be}{\begin{equation}}
\newcommand{\ee}{\end{equation}}
\newcommand{\al}[1]{\begin{align}#1\end{align}}
\newcommand{\eq}[1]{\begin{align*}#1\end{align*}}
\newcommand{\va}{\varphi}
\newcommand{\pp}{\partial}
\newtheorem{thm}{Theorem}[section]
\newtheorem{lem}[thm]{Lemma}
\newtheorem{cor}[thm]{Corollary}
\newtheorem{prop}[thm]{Proposition}
\theoremstyle{remark}
\newtheorem{rmk}[thm]{Remark}
\title[]{Global Lipschitz stability for inverse problems for 
radiative transport equations}
\author[]{Manabu Machida $^1$}
\address{$^1$ Institute for Medical Photonics Research, Hamamatsu University School of Medicine, Hamamatsu, Shizuoka 431-3192, Japan}
\email{machida@hama-med.ac.jp}
\author[]{Masahiro Yamamoto $^{2,3,4}$}
\address{$^2$ Graduate School of Mathematical Sciences, The University of Tokyo, 3-8-1 Komaba, Meguro, Tokyo 153, Japan}
\address{$^3$ Honorary Member of Academy of Romanian Scientists, Splaiul Independentei Street, no 54, 050094 Bucharest Romania}
\address{$^4$ Peoples' Friendship University of Russia (RUDN University) 6 Miklukho-Maklaya St, Moscow, 117198, Russian Federation}
\email{myama@ms.u-tokyo.ac.jp}
\begin{document}

\begin{abstract}
We consider inverse problems of determining coefficients or time independent 
factors of source terms in radiative transport equations 
by means of Carleman estimate.
We establish global Lipschitz stability results with an additional 
condition which requires some strict positivity for
initial value or given factor of source,
but we need not any extra conditions on domains of velocities, which is the 
main achievement of this article compared with the existing work 
by Machida and Yamamoto ({\it Inverse Problems} {\bf 30} 035010, 2014). 
The proof relies on a Carleman estimate with a piecewise linear weight 
function according to the partition of the velocity domain.
\end{abstract}

\maketitle

\section{Introduction}

Let $\Omega$ be a bounded domain of $\mathbb{R}^n$, $n\ge 2$ with $C^1$-boundary $\partial\Omega$. Let $V$ be a domain in $\Rm^n$ with $0\notin\overline{V}$, where $\overline{V}$ is the closure of $V$. 
We use symbols $x = (x_1, ..., x_n) \in \mathbb{R}^n$,
$\nabla={^t}(\frac{\pp}{\pp x_1},\dots,\frac{\pp}{\pp x_n})$. 
Moreover, $v\cdot v'$ denotes the 
scalar product of vectors $v,v'$. Let $u(x,v,t)$ be the solution to the following radiative transport equation.
\al{
P_0u+\sigma(x,v)u-\int_Vk(x,v,v')u(x,v',t)\,dv'= F(x,v,t),
&\quad x\in\Omega,\; v\in V,\; 0<t<T,
\label{1.1}
\\
u(x,v,0)=a(x,v),
&\quad x\in\Omega,\;v\in V,
\label{rte0ic}
\\
u(x,v,t)=g(x,v,t)
&\quad\mbox{on}\;\Gamma_-\times(0,T),
\label{rte0bc}
}
where $F$ is a source term and 
\eq{
P_0u(x,v,t)=\pp_tu(x,v,t)+v\cdot\nabla u(x,v,t).  
}
Let $\nu(x)$ be the outward unit vector normal to $\partial\Omega$ at $x\in\partial\Omega$. We define $\Gamma_+$ and $\Gamma_-$ by
\eq{
\Gamma_{\pm}
= \left\{(x,v)\in\partial\Omega\times V;\;(\pm\nu(x)\cdot v) > 0\right\}.
}
The coefficients are assumed as follows.
\eq{
\sigma\in L^{\infty}(\Omega\times V),\quad
k\in L^{\infty}(\Omega\times V\times V).
}
In this article,
we will consider the inverse problem of determining $\sigma$ and 
a time independent factor of the source term $F(x,v,t)$
by $u$ on $\Gamma_+$, $0<t<T$, assuming that $k$ is known.

For an arbitrary fixed constant $M>0$, we set
\eq{
\mathcal{U}=\left\{u\in X;\;
\|u\|_X+\|\nabla u\|_{H^1(0,T;L^2(\Omega\times V))}\le M\right\},
}
where
\eq{
X=H^1(0,T;L^{\infty}(\Omega\times V)\cap H^2(0,T;L^2(\Omega\times V)).
}
Here, $H^1,H^2$ denote usual Sobolev spaces over the specified domains.
A solution which satisfies 
(\ref{1.1}) - (\ref{rte0bc}) can be obtained in $\mathcal{U}$ due 
to the regularity and compatibility conditions of the initial value $a$ and 
the boundary value $g$.  As for the 
direct problems, see Bardos \cite{Bardos70}, Douglis \cite{Douglis66},
Prilepko and Ivankov \cite{Prilepko-Ivankov84}, and Ukai \cite{Ukai86}.

The study of inverse transport problems has started in radiative transfer by 
Bellman, Kagiwada, Kalaba and Ueno
\cite{Bellman-Kagiwada-Kalaba-Ueno65} and neutron transport by Case 
\cite{Case73}. 
As for application aspects of related inverse problems, see 
Arridge \cite{Arridge99} and Arridge and Schotland 
\cite {Arridge-Schotland09}.

For the mathematical analysis for inverse problems 
for radiative transport equations, we can have two main 
methodologies by (i) albedo operators and (ii) Carleman estimate.
Limited to the non-stationary case and not aiming at any comprehensive 
literature, we refer to works below.

First, the albedo operator can be interpreted as a mapping from boundary 
input on some subboundary to boundary data of the solution on other part of
the boundary. As for the approach by the albedo operator, we first 
refer to a review article by Bal \cite{Bal09}.   
Moreover, the uniqueness was studied by Choulli and Stefanov \cite{Choulli96}
and Stefanov \cite{Stefanov03}. 
In general, one can prove the stability of H\"older type. See
Bal and Jollivet \cite{Bal-Jollivet08,Bal-Jollivet09,Bal-Jollivet10}. 
This approach does not require strong assumptions such as nonzero initial 
values, but measurements have to be performed infinitely many times for 
obtaining the uniqueness and the stability.

Second, as for the approach by Carleman estimate, we refer 
to Bukhgeim and Klibanov \cite{Bukhgeim-Klibanov81} and 
Klibanov \cite{Klibanov84, Klibanov92} as pioneering works,
which apply Carleman estimates to inverse problems for second-order partial 
differential equations such as hyperbolic equations.  Such an approach 
yields the uniqueness and the stability for inverse problems for 
partial differential equations with a single measurement. 
Moreover, as for the inverse problems by Carleman estimates, one can consult 
Beilina and Klibanov \cite{Beilina-Klibanov12}, 
Bellassoued and Yamamoto \cite{BY2017},
Imanuvilov and Yamamoto \cite{Imanuvilov-Yamamoto01a,Imanuvilov-Yamamoto01b},
Isakov \cite{Isakov06}, Klibanov \cite{Klibanov13},
Klibanov and Timonov \cite{Klibanov-Timonov04}, and Yamamoto \cite{Yamamoto09}.

With the Carleman-estimate technique for the radiative transport 
equations, Klibanov and Pamyatnykh \cite{Klibanov-Pamyatnykh08}
proved the Lipschitz stability in determining $\sigma$ provided that 
$a(x,v) := u(x,v,0) \ne 0$ for all $(x,v) \in \overline{\Omega}
\times \overline{V}$ and 
\be
(a(x,v)\sigma(x,v))^2=(a(x,-v)\sigma(x,-v))^2 \quad 
\mbox {for $x\in\Omega$ and $v\in V$}
\label{1.4}
\ee
in the case of $V=\{v;\;|v|=1\}$. 
The extra condition (\ref{1.4}) is required because 
in \cite{Klibanov-Pamyatnykh08},
the extension of $u$ to the time interval $(-T,T)$ is necessary for the 
Carleman estimate, and (\ref{1.4}) is essential for the regularity of the extension.
The condition (\ref{1.4}) is concerned also with unknown $\sigma$, and so restrictive.
See also Klibanov and Pamyatnykh \cite{Klibanov-Pamyatnykh06} and 
Klibanov and Yamamoto \cite{Klibanov-Yamamoto07} as for related problems
on a radiative transport equation. 

After \cite{Klibanov-Pamyatnykh08}, we refer to Machida and 
Yamamoto \cite{Machida-Yamamoto14}:
it established a Carleman estimate with a linear weight function which is 
different from \cite{Klibanov-Pamyatnykh08} and the global Lipschitz 
stability for the inverse problems without any extension 
of the solution $u$ to $(-T,T)$.  In particular, any extra conditions for 
$\sigma(x,v)$ such as (\ref{1.4}) are not required.  
However, it must be assumed in \cite{Machida-Yamamoto14} that 
$V$ is a sectional domain, which means that $v$ is confined in narrow 
directions. More precisely, $v\in V$ must satisfy 
\be
(\gamma\cdot v)>0 \quad \mbox{for an arbitrary fixed $\gamma\in\Rm^n$}.
\label{1.5}
\ee

The main purpose of this article is to remove (\ref{1.5}) and improve 
\cite{Machida-Yamamoto14}.
More precisely, we prove the global Lipschitz stability results 
for the inverse problems with any bounded domain $V$ with 
$0\notin\overline{V}$, not necessarily satisfying (\ref{1.5}).

In this article, the weight function for the Carleman estimate is linear 
in $x,t$, similar to \cite{Machida-Yamamoto14}, but the main difference is
that we make choices of the weight according to suitably partitioned subomains 
of $V$ for deleting (\ref{1.5}), so that the weight function can be understood 
as piecewise linear function in $v$.

As for similar inverse problems for transport equations with 
$k\equiv 0$ in (\ref{1.1}), we refer to Gaitan-Ouzzane \cite{Gaitan-Ouzzane13}. Moreover one can consult Cannarsa, Floridia, G\"olgeleyen and Yamamoto 
\cite{CFGY}, Cannarsa, Floridia and Yamamoto \cite{Cannarsa-Floridia-Yamamoto} and G\"olgeleyen and Yamamoto \cite{Golgeleyen-Yamamoto16}, where a linear weight function is used.

The remainder of the paper is organized as follows. We state our main results 
in Section 2. 
The inverse coefficient problem reduces to an inverse source problem. 
Section 3 is devoted to the introduction of coupled radiative transport equations. In Section 4, we prove our key Carleman estimate. 
The energy estimate for the coupled radiative transport equations is 
established in Section 5.  The proof for the main theorem is given in 
Section 6.  Finally, we give concluding remarks in Section 7.

\section{Main results}
\label{main}

Let us choose $\widetilde{V}=\{v_0<|v|<v_1\}$ with sufficiently small $v_0$ and large $v_1$ such that $V\subset\widetilde{V}$. Then we take the zero extension for $\sigma(x,v)$ in $v$ and $k(x,v,v')$ in $(v,v')$, i.e., $\sigma=0$ for $v\in\widetilde{V}\setminus\overline{V}$, and $k=0$ if $v\in\widetilde{V}\setminus\overline{V}$ or $v'\in\widetilde{V}\setminus\overline{V}$. Moreover we set $a=g=0$ for $v\in\widetilde{V}\setminus\overline{V}$. Then we can replace $V$ in (\ref{1.1}), (\ref{rte0ic}), and (\ref{rte0bc}) with $\widetilde{V}$. The integral term can be further expressed as
\eq{
\int_Vk(x,v,v')u(x,v',t)\,dv'
&=\int_{\widetilde{V}}k(x,v,v')u(x,v',t)\,dv'
\\
&=
\sum_{j=0}^{m-1}\int_{V_j}k(x,v,v')u(x,v',t)\,dv',
}
where $m$ subdomains $V_j$ ($j=0,\dots,m-1$) are set as follows. We note that in spherical coordinates $v\in\tilde{V}$ is specified by $(r,\theta,\phi_1,\dots,\phi_{n-2})$, where $v_0<r<v_1$, $0\le\theta<2\pi$, $0\le\phi_i\le\pi$ ($i=1,\dots,n-2$) ($(r,\theta)$ in the case of $n=2$). We define subdomains $V_j$ ($j=0,\dots,m-1$) as
$$
V_j=\Bigl\{v\in\widetilde{V};
v_0<r<v_1,\;\frac{2\pi(l_0-1)}{L_0}\le\theta<\frac{2\pi l_0}{L_0},\;
$$
\begin{equation}\label{2.1}
\frac{\pi(l_i-1)}{L_i}\le\phi_i<\frac{\pi l_i}{L_i},\;
\;i=1,\dots,n-2\Bigr\},
\end{equation}
where $l_0=1,\dots,L_0$, $l_i=1,\dots,L_i$ ($i=1,\dots,n-2$), $m=L_0L_1\dots L_{n-2}$ and
\eq{
j=l_0+(l_1-1)L_0+(l_2-1)L_0L_1+\cdots+(l_{n-2}-1)L_0\dots L_{n-3}.
}
Let $\gamma_j\in V_j$ be arbitrarily chosen vectors ($j=0,1,\dots,m-1$). We take sufficiently large $m$ such that if $v\in V_j$, then $\gamma_j\cdot v\ge\kappa$ for an arbitrary fixed constant $\kappa>0$. That is, we have
\eq{
\min_{v\in\overline{V_j}}(\gamma_j\cdot v)>0,\quad j=0,\dots,m-1.
}

Our main results are stated as follows.

\begin{thm}
\label{mainthm}
Let $u^i(x,v,t)$ ($i=1,2$) be solutions to the radiative transport equation for $\sigma^i,a^i$, i.e.,
\eq{
\left\{\begin{aligned}
\left(\pp_t+v\cdot\nabla+\sigma^i(x,v)\right)u(x,v,t)-
\int_Vk(x,v,v')u(x,v',t)\,dv'=0,
\\
x\in\Omega,\;v\in V,\;0<t<T,
\\
u(x,v,0)=a^i(x,v),\quad x\in\Omega,\;v\in V,
\\
u(x,v,t)=g(x,v,t),\quad(x,v)\in\Gamma_-,\;0<t<T.
\end{aligned}\right.
}
Let $u^i\in\mathcal{U}$ and assume $\|\sigma^i\|_{L^{\infty}(\Omega\times V)}$, $\|k\|_{L^{\infty}(\Omega\times V\times V)}\le M$. Suppose that $T$ is large 
enough to satisfy
\al{
T>
\frac{\max_{0\le j\le m-1}\max_{x\in\overline{\Omega}}(\gamma_j\cdot x)-\min_{0\le j\le m-1}\min_{x\in\overline{\Omega}}(\gamma_j\cdot x)}
{\min_{0\le j\le m-1}\min_{v\in\overline{V_j}}(\gamma_j\cdot v)}.
\label{largeT}
}
We assume that there exists a constant $a_0>0$ such that
\al{
a^1(x,v)\ge a_0\quad\mbox{or}\quad a^2(x,v)\ge a_0,\quad
\mbox{a.e.~in}\;(x,v)\in\Omega\times V.
\label{nonzeroinit}
}
Then there exists a constant $C=C(M,a_0)>0$ such that
\eq{
\|\sigma^1-\sigma^2\|_{L^2(\Omega\times V)}
&\le
C\left(\int_0^T\int_{\Gamma_+}(\nu(x)\cdot v)
\left|\pp_t(u^1-u^2)(x,v,t)\right|^2\,dSdvdt\right)^{1/2}
\\
&+
C\left(\|a^1-a^2\|_{L^2(\Omega\times V)}+\|\nabla a^1-\nabla a^2\|_{L^2(\Omega\times V)}\right)
}
and
\eq{
&
\left(\int_0^T\int_{\Gamma_+}(\nu(x)\cdot v)|\pp_t(u^1-u^2)(x,v,t)|^2\,dSdvdt
\right)^{1/2}
\\
&\le
\|\sigma^1-\sigma^2\|_{L^2(\Omega\times V)}+\|a^1-a^2\|_{L^2(\Omega\times V)}+
\|\nabla a^1-\nabla a^2\|_{L^2(\Omega\times V)}.
}
Here we have $C(M,a_0)\to\infty$ as $M\to\infty$ or $a_0\to0$.
\end{thm}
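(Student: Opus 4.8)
The plan is to prove the two inequalities separately. Throughout, set $u=u^1-u^2$, $f=\sigma^1-\sigma^2$ and $a=a^1-a^2$; subtracting the two transport equations and writing $\sigma^1u^1-\sigma^2u^2=\sigma^1u+fu^2$ gives
\eq{
P_0u+\sigma^1u-\int_Vk(x,v,v')u(x,v',t)\,dv'=-fu^2,\quad u(\cdot,0)=a,\quad u|_{\Gamma_-}=0.
}
The second (reverse) inequality is the easier one and does not use the Carleman weight: differentiating the above in $t$, putting $y=\pp_tu$, multiplying by $y$ and integrating over $\Omega\times V\times(0,T)$, the term $\int y\,(v\cdot\nabla y)$ produces $\tfrac12\int_0^T\int_{\Gamma_+}(\nu\cdot v)y^2$ (the $\Gamma_-$ contribution vanishes because $y=0$ there), while the source $-f\pp_tu^2$ is bounded by $M|f|$ since $u^2\in\mathcal{U}$. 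A Gronwall argument, together with the representation of $y(\cdot,0)$ in terms of $f$ and the data (see below), then bounds $\int_0^T\int_{\Gamma_+}(\nu\cdot v)|y|^2$ by $\|f\|_{\LLL}^2+\|a\|_{\LLL}^2+\|\nabla a\|_{\LLL}^2$, which is the reverse estimate.

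For the main (stability) inequality I would follow the Bukhgeim--Klibanov scheme. The key algebraic observation is that evaluating the $u$-equation at $t=0$ gives
\eq{
y(x,v,0)=-v\cdot\nabla a-\sigma^1a+\int_Vk\,a\,dv'-f\,a^2,
}
so that on the set $\{a^2\ge a_0\}$ one may solve $f=\tfrac1{a^2}\bigl(-y(\cdot,0)-v\cdot\nabla a-\sigma^1a+\int_Vk\,a\,dv'\bigr)$, and symmetrically on $\{a^1\ge a_0\}$ using the splitting $\sigma^1u^1-\sigma^2u^2=\sigma^2u+fu^1$. Because hypothesis (\ref{nonzeroinit}) guarantees that at a.e.\ point at least one of $a^1,a^2$ is $\ge a_0$, these two representations cover $\Omega\times V$ and yield the pointwise bound $|f|^2\le C(a_0,M)\,(|y(x,v,0)|^2+|a|^2+|\nabla a|^2+|\int_Vk\,a\,dv'|^2)$. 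Integrating, the whole problem reduces to estimating $\|y(\cdot,0)\|_{\LLL}$, and it is exactly here that the constant acquires its dependence on $a_0$ (blowing up as $a_0\to0$) and on $M$.

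To bound $y(\cdot,0)$ I would apply the Carleman estimate of Section 4 to the equation satisfied by $y=\pp_tu$, namely $P_0y+\sigma^1y-\int_Vk\,y\,dv'=-f\pp_tu^2$, using on each subdomain $V_j$ the linear weight $\va_j(x,t)=\gamma_j\cdot x-\beta t$ with a constant $\beta>0$. Since $\gamma_j\cdot v\ge\kappa>0$ on $V_j$, one has $P_0\va_j=\gamma_j\cdot v-\beta$ bounded away from zero, which is what makes the first-order Carleman estimate coercive; the choice $\beta>0$ makes $\va_j$ strictly decreasing in $t$ and maximal at $t=0$, and condition (\ref{largeT}) is precisely what is needed so that the level sets of the piecewise weight sweep out $\Omega\times(0,T)$. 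The source $|f\,\pp_tu^2|\le M|f|$ is, by the previous paragraph, controlled by $|y(\cdot,0)|$ and the initial-data differences; using the fundamental theorem of calculus in $t$ to transfer the weighted interior bound to the level $t=0$ (where the weight is largest) and then taking $s$ large, one absorbs the source into the left-hand side and is left with $\|y(\cdot,0)\|_{\LLL}^2\le C\bigl(\|a\|^2+\|\nabla a\|^2+\int_0^T\int_{\Gamma_+}(\nu\cdot v)|y|^2\,dSdvdt\bigr)$. Combined with the reduction of the previous paragraph this gives the stability inequality.

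The main obstacle I anticipate is the nonlocal scattering term $\int_Vk\,y\,dv'$: it couples velocities lying in different subdomains $V_j$, which carry different linear weights $\va_j$, so after inserting the exponential the ratio $e^{2s\va_j(x,t)}/e^{2s\va_{j'}(x,t)}$ appears and is not bounded uniformly in $s$. This is why a single global Carleman estimate cannot be used and why the separate energy estimate of Section 5 is needed: it controls the coupled transport quantities without any exponential weight, so that the scattering term can be treated as a genuine lower-order perturbation and absorbed for large $s$. Closing this absorption simultaneously for all $m$ subdomains, while keeping the constant uniform, is the delicate point of the argument.
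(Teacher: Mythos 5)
Your architecture is the same as the paper's: subtract the two equations to reduce to an inverse source problem with $f=\sigma^1-\sigma^2$ and $R=-u^2$ (the paper's Theorem 2.5), prove the reverse inequality by an unweighted energy/Gronwall argument for $\pp_t u$ (the paper's Lemma 5.1), and prove the stability inequality by the Bukhgeim--Klibanov method with the piecewise-in-$v$ linear weight $\va_j(x,t)=(\gamma_j\cdot x)-\beta t$ on each $V_j$, extracting $f$ from the $t=0$ trace via the positivity hypothesis. Your pointwise two-splitting treatment of the ``or'' in (\ref{nonzeroinit}) is a legitimate elaboration. Two points, however, need attention.

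First, a technical omission: the Carleman estimate (Lemma \ref{carleman0} / Proposition \ref{carlemans}) requires the function to vanish at $t=T$, and $\pp_t u$ does not. The paper inserts a cut-off $\chi$ supported away from $t=T$ and uses (\ref{largeT}) to choose $\beta$ with $r_{\max}-\beta T<r_0<r_1<r_{\min}$, so that the cut-off error $(\pp_t\chi)\pp_t u_i$, controlled by the energy estimate, carries the small factor $e^{2sr_0}$ and is beaten by the $e^{2sr_1}$ on the left. Your ``fundamental theorem of calculus'' device can be made to do the same job, but the role of (\ref{largeT}) is precisely this separation of exponential levels between $t$ near $T$ and $t=0$, not a sweeping of level sets; you should make the cut-off (or the estimate of the $t=T$ boundary term) explicit.

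Second, and more seriously, your proposed treatment of the cross-velocity scattering term does not close. You correctly identify that for $v\in V_i$, $v'\in V_j$ with $i\ne j$ the ratio $e^{2s\va_i(x,t)}/e^{2s\va_j(x,t)}=e^{2s(\gamma_i-\gamma_j)\cdot x}$ is unbounded in $s$, but bounding the offending term by $e^{2s\sup\va_i}$ times the \emph{unweighted} energy produces a contribution of size $e^{2sr_{\max}}\bigl(\|f\|^2_{\LLL}+\dots\bigr)$, and since $r_{\max}\ge r_{\min}>r_1$ this dominates the term $s\,e^{2sr_1}\|f\|^2_{\LLL}$ on the left-hand side and cannot be absorbed by taking $s$ large. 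The paper's route is different: Proposition \ref{carlemans} is stated for the \emph{full} coupled operator $\mathcal{P}_i$, so that in the application the right-hand side is simply the weighted source $\int|F_i|^2e^{2s\va_i}$ and the coupling term is never transferred between two different weights. (The paper's own passage from Lemma \ref{carleman0}, which only absorbs the self-coupling over $V_j$, to Proposition \ref{carlemans} is itself very terse on exactly this point, so your instinct that this is the delicate step is sound; but the remedy you propose is not the one that works.)
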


Thus we have removed extra conditions (\ref{1.4}) and (\ref{1.5}) to prove the global Lipschitz stability for the inverse coefficient problem. Condition (\ref{largeT}) means that the critical length $T$ of the time interval depends on the partition of $V$.
 
\begin{cor}
If we assume $a^1=a^2$ in $\Omega\times V$, then we have the following both-sided estimate.
\eq{
&
\left(\int_0^T\int_{\Gamma_+}(\nu(x)\cdot v)|\pp_t(u^1-u^2)(x,v,t)|^2\,dSdvdt
\right)^{1/2}
\le
\|\sigma^1-\sigma^2\|_{L^2(\Omega\times V)}
\\
&\le
C\left(\int_0^T\int_{\Gamma_+}(\nu(x)\cdot v)
\left|\pp_t(u^1-u^2)(x,v,t)\right|^2\,dSdvdt\right)^{1/2}.
}
\end{cor}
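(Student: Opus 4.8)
The plan is to deduce both inequalities directly from the two estimates already furnished by Theorem \ref{mainthm}, specializing to the case $a^1=a^2$. The only point to settle at the outset is that the hypotheses of Theorem \ref{mainthm} remain in force under this specialization. Since $a^1=a^2$, the strict positivity requirement (\ref{nonzeroinit}) collapses to the single condition $a^1\ge a_0$ (equivalently $a^2\ge a_0$) a.e.\ in $\Omega\times V$, which I take as the standing assumption, together with the largeness condition (\ref{largeT}) on $T$. Under these hypotheses both displayed estimates of Theorem \ref{mainthm} are available, and the whole argument is a matter of reading them with the initial-data terms switched off.

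First I would substitute $a^1=a^2$ into both estimates, which annihilates the initial-data contributions: one has $\|a^1-a^2\|_{L^2(\Omega\times V)}=0$ and $\|\nabla a^1-\nabla a^2\|_{L^2(\Omega\times V)}=0$. Consequently the second estimate of Theorem \ref{mainthm} reduces to
\[
\left(\int_0^T\int_{\Gamma_+}(\nu(x)\cdot v)|\pp_t(u^1-u^2)|^2\,dSdvdt\right)^{1/2}
\le
\|\sigma^1-\sigma^2\|_{L^2(\Omega\times V)},
\]
which is exactly the left-hand inequality of the corollary.

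Next, the first estimate of Theorem \ref{mainthm}, with the two vanishing initial-data norms removed, becomes
\[
\|\sigma^1-\sigma^2\|_{L^2(\Omega\times V)}
\le
C\left(\int_0^T\int_{\Gamma_+}(\nu(x)\cdot v)|\pp_t(u^1-u^2)|^2\,dSdvdt\right)^{1/2},
\]
which is precisely the right-hand inequality, with the same constant $C=C(M,a_0)$ inherited from the theorem. Chaining the two displays yields the asserted both-sided estimate. There is essentially no obstacle here: the corollary is an immediate specialization of Theorem \ref{mainthm}, and the only care required is the bookkeeping of hypotheses, namely verifying that imposing $a^1=a^2$ is compatible with (\ref{nonzeroinit})---which it is, once (\ref{nonzeroinit}) is read as the requirement $a^1\ge a_0$ on the common initial value.
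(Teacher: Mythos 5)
Your proposal is correct and is exactly the argument the paper intends: the corollary is stated without proof precisely because it follows by setting $a^1=a^2$ in the two estimates of Theorem \ref{mainthm}, which annihilates the initial-data terms and leaves the two-sided bound. Your bookkeeping of the hypothesis (\ref{nonzeroinit}) under this specialization is also the right (and only) point needing a remark.
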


This means that the choice of the norm for the boundary data $\Gamma_+\times(0,T)$ is the best possible for our inverse problem.

\begin{rmk}
Positive initial values in (\ref{nonzeroinit}) can be set up by 
a combination of some control procedure.  More precisely,
let us assume that $\Omega$ is strictly convex. Suppose $\sigma^2$ is known and we consider the radiative transport equation for $\sigma^2$ during the time interval $(-T_0,T)$ with some $T_0>0$. We extend $g$ such that $g$ belongs to some weighted $L^2$-space in $\Gamma_-\times(-T_0,0)$. The value $u^2(x,v,-T_0)$ may be either zero or nonzero. By the exact controllability result \cite{Klibanov-Yamamoto07}, we can have $u^2(x,v,0)=a^2(x,v)\ge a_0$, a.e.~in $(x,v)\in\Omega
\times V$, by adjusting the boundary value $g$ for sufficiently large $T_0>0$. 

In the case of optical tomography \cite{Arridge99,Arridge-Schotland09}, the boundary value $g$ is the incident laser beam of near-infrared light. We can prepare positive initial values by turning on the laser at $t=-T_0$ before starting to detect the out-going light on the surface of biological tissue at $t=0$.
\end{rmk}

\begin{rmk}
It is also possible to determine the scattering coefficient $\sigma_s$ if we write $k$ as $k(x,v,v')=\sigma_s(x,v)p(x,v,v')$. The proof is similar to that of Theorem \ref{mainthm}. We refer the reader to \cite{Machida-Yamamoto14}.
\end{rmk}

Next we state the second main result for an inverse source problem.
We consider the following radiative transport equation with an internal source term $F(x,v,t)$.
\al{
\label{subtracted}
\left\{\begin{aligned}
\left(\pp_t+v\cdot\nabla+\sigma(x,v)\right)u(x,v,t)
-\int_Vk(x,v,v')u(x,v',t)\,dv'=F(x,v,t),
\\
x\in\Omega,\; v\in V,\; 0<t<T,
\\
u(x,v,0)=a(x,v),
\quad x\in\Omega,\;v\in V,
\\
u(x,v,t)=g(x,v,t)
\quad\mbox{on}\;\Gamma_-\times(0,T).
\end{aligned}\right.
}
Let us assume that $F(x,v,t)$ has the following form.
\eq{
F(x,v,t)=f(x,v)R(x,v,t),
}
By subtraction for the equations in Theorem \ref{mainthm}, we obtain (\ref{subtracted}) with
\al{
u(x,v,t)=u^1(x,v,t)-u^2(x,v,t),\quad \sigma(x,v)=\sigma^1(x,v),
\label{subtr1}
}
\al{
a(x,v)=a^1(x,v)-a^2(x,v),\quad g(x,v,t)=0,
\label{subtr2}
}
\al{
f(x,v)=\sigma^1(x,v)-\sigma^2(x,v),\quad R(x,v,t)=-u^2(x,v,t).
\label{subtr3}
}

The following global Lipschitz stability is obtained for the inverse source problem for (\ref{subtracted}).

\begin{thm}
\label{invsource}
Let $u(x,v,t)$ be the solution to (\ref{subtracted}). Suppose $u\in\mathcal{U}$. We assume that $\|\sigma\|_{L^{\infty}(\Omega\times V)}$, $\|k\|_{L^{\infty}(\Omega\times V\times V)}\le M$. Suppose that $T$ satisfies (\ref{largeT}). We assume $R,\pp_tR\in L^2(0,T;L^{\infty}(\Omega\times V))$. For an arbitrary fixed constant $a_0>0$, we further assume that $R(x,v,0)>a_0$ almost all $(x,v)\in\Omega\times V$. Then there exists a constant $C=C(M,a_0)>0$ such that
\eq{
\|f\|_{L^2(\Omega\times V)}
&\le
C\left(\int_0^T\int_{\pp\Omega}\int_V(\nu\cdot v)|\pp_tu|^2\,dSdvdt\right)^{1/2}
\\
&+
C\left(\|a\|_{L^2(\Omega\times V)}+\|\nabla a\|_{L^2(\Omega\times V)}\right)
}
and
\eq{
&
\left(\int_0^T\int_{\pp\Omega}\int_V(\nu\cdot v)|\pp_tu|^2\,dSdvdt\right)^{1/2}
\\
&\le
\|f\|_{L^2(\Omega\times V)}+\|a\|_{L^2(\Omega\times V)}+
\|\nabla a\|_{L^2(\Omega\times V)}
}
for any $f\in L^2(\Omega\times V)$. If $g=0$, we have
\al{
\|f\|_{L^2(\Omega\times V)}
&\le
C\left(\int_0^T\int_{\Gamma_+}(\nu\cdot v)|\pp_tu|^2\,dSdvdt\right)^{1/2}
\nonumber \\
&+
C\left(\|a\|_{L^2(\Omega\times V)}+\|\nabla a\|_{L^2(\Omega\times V)}\right)
\label{invsource1}
}
and
\al{
&
\left(\int_0^T\int_{\Gamma_+}(\nu\cdot v)|\pp_tu|^2\,dSdvdt\right)^{1/2}
\nonumber \\
&\le
\|f\|_{L^2(\Omega\times V)}+\|a\|_{L^2(\Omega\times V)}+
\|\nabla a\|_{L^2(\Omega\times V)}
\label{invsource2}
}
for any $f\in L^2(\Omega\times V)$.
\end{thm}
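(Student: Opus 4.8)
The plan is to run a Bukhgeim--Klibanov argument on the time-differentiated equation, using the Carleman estimate of Section 4 and the energy estimate of Section 5. First I set $w=\pp_tu$ and differentiate (\ref{subtracted}) in $t$; since $\sigma$, $k$ and $f$ do not depend on $t$, the function $w$ satisfies $P_0w+\sigma w-\int_Vk(x,v,v')w(x,v',t)\,dv'=f\,\pp_tR$ together with $w=\pp_tg$ on $\Gamma_-\times(0,T)$, so that $(u,w)$ is the coupled system of Section 3. Evaluating (\ref{subtracted}) at $t=0$ and using the positivity $R(x,v,0)>a_0$ yields the pointwise identity $f=R(\cdot,0)^{-1}\bigl(w(\cdot,0)+v\cdot\nabla a+\sigma a-\int_Vk\,a\,dv'\bigr)$, hence $\|f\|_{\LLL}\le C\bigl(\|w(\cdot,\cdot,0)\|_{\LLL}+\|a\|_{\LLL}+\|\nabla a\|_{\LLL}\bigr)$. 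The whole problem is thereby reduced to estimating the interior initial value $\|w(\cdot,\cdot,0)\|_{\LLL}$ by the boundary flux.

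Next I apply the Carleman estimate of Section 4 on each cell $V_j$ with the weight $\varphi(x,v,t)=\gamma_j\cdot x-\lambda t$ for $v\in V_j$. The partition is arranged so that $\varphi$ is independent of $v$ on each $V_j$ and $L\varphi:=\pp_t\varphi+v\cdot\nabla\varphi=\gamma_j\cdot v-\lambda\ge\kappa-\lambda$; choosing $\lambda$ in the range $d/T<\lambda<\kappa$, with $d=\max_j\max_{\overline\Omega}(\gamma_j\cdot x)-\min_j\min_{\overline\Omega}(\gamma_j\cdot x)$, keeps $L\varphi$ bounded below by a positive constant, and (\ref{largeT}) is exactly what makes this range nonempty. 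Because $L\varphi>0$, the integration by parts in the Carleman identity places the initial-time term $s\int_{\Omega\times V}(L\varphi)|w(\cdot,0)|^2e^{2s\varphi(\cdot,0)}$ on the correct side, while the lateral contribution on $\Gamma_-$ has favourable sign (and vanishes when $g=0$). To remove the uncontrolled final-time boundary term I cut $w$ off by $\chi(t)$ with $\chi\equiv1$ near $t=0$ and $\chi\equiv0$ near $t=T$; summing over $j$ then gives, schematically, a bound for $s\int|w(\cdot,0)|^2e^{2s\varphi(\cdot,0)}+s^2\int_Q|w|^2e^{2s\varphi}$ (with $Q=\Omega\times V\times(0,T)$) by the measured flux $\int_0^T\int_{\Gamma_+}(\nu\cdot v)|w|^2$, the source $\int_Q|f\pp_tR|^2e^{2s\varphi}$, the commutator $\int_Q|\chi'w|^2e^{2s\varphi}$, and the cross-cell scattering contribution.

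The estimate is then closed for large $s$. Since $f$ is time independent and $\pp_tR\in L^2(0,T;L^\infty)$, the source is bounded by $C\int_{\Omega\times V}|f|^2e^{2s\varphi(\cdot,0)}$, and substituting the identity for $f$ turns it into $Ca_0^{-2}\int(|w(\cdot,0)|^2+|a|^2+|\nabla a|^2)e^{2s\varphi(\cdot,0)}$; the $|w(\cdot,0)|^2$ part is absorbed into the initial-time term because the latter carries the extra factor $s$ (this is where $C(M,a_0)\to\infty$ as $a_0\to0$ originates). The commutator $\chi'w$ is supported on some $[T_1,T]$ with $\lambda T_1>d$, where $e^{2s\varphi}$ is exponentially smaller than $e^{2s\varphi(\cdot,0)}$; bounding the unweighted $\int|w|^2$ there by the energy estimate of Section 5 makes this term absorbable for $s$ large, with no a priori constant surviving, so the bound is genuinely Lipschitz rather than of conditional type. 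Fixing $s=s_0$ large, the weights $e^{2s_0\varphi}$ are bounded above and below on $\overline Q$, which converts the weighted inequality into the unweighted estimate for $\|f\|_{\LLL}$, i.e.\ (\ref{invsource1}); the general-boundary version follows likewise. The reverse inequality (\ref{invsource2}) is a forward energy/trace estimate: multiplying the equation for $w$ by $w$ and integrating bounds $\int_0^T\int_{\pp\Omega\times V}(\nu\cdot v)|w|^2$ by $\|w(\cdot,0)\|^2$, the source and $\int_Q|w|^2$, and $\|w(\cdot,0)\|_{\LLL}\le C(\|f\|+\|a\|+\|\nabla a\|)$ by the $t=0$ identity.

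The main obstacle is the scattering term. On each cell the weight is constant in $v$, so the within-cell part $\int_{V_j}k(x,v,v')w(x,v',t)\,dv'$ carries matching exponentials and is harmless. The cross-cell part, however, couples velocities lying in different cells, and there the ratio $e^{2s(\gamma_{j(v)}-\gamma_{j(v')})\cdot x}$ of the two weights is of size $e^{O(s)}$; consequently $\int_Q|\int_Vk\,w\,dv'|^2e^{2s\varphi}$ cannot be absorbed into the Carleman left-hand side by a velocity-pointwise Cauchy--Schwarz, in contrast with the zero-order term $\sigma w$. This is precisely the role of the coupled energy estimate of Section 5: the collision operator is controlled there in an unweighted, velocity-symmetric norm, where Cauchy--Schwarz and Gronwall yield $\int_Q|w|^2\le C(\|w(\cdot,0)\|^2+\|f\|^2+\text{data})$ with $C$ independent of $s$, and this bound must be reconciled with the weighted Carleman estimate at the fixed parameter $s_0$ without reintroducing the factor $e^{O(s)}$. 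Managing this interplay is the delicate heart of the proof, and it is what ultimately permits the removal of the sectional-domain hypothesis (\ref{1.5}).
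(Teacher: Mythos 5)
Your overall architecture---differentiate in $t$, recover $f$ from the $t=0$ equation using $R(\cdot,\cdot,0)>a_0$, cut off near $t=T$, apply the Carleman estimate with the piecewise linear weight $(\gamma_j\cdot x)-\beta t$ and $\beta$ chosen in the nonempty range guaranteed by (\ref{largeT}), absorb the source term using $\va_i(x,t)\le\va_i(x,0)$ and the extra factor of $s$ on the left, dispose of the commutator $\chi'\pp_tu$ via the energy estimate because its support confines the weight below $e^{2sr_0}\ll e^{2sr_1}$, let $s\to\infty$, and obtain the reverse inequality from the forward energy/trace identity---coincides with the paper's proof of Theorem \ref{invsource} (Sections \ref{energy} and \ref{proof}).

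The gap is in your last paragraph, i.e.\ exactly at the cross-cell scattering term, which is the one point where this paper goes beyond \cite{Machida-Yamamoto14}. You correctly observe that $\int_Q|\int_Vk\,w\,dv'|^2e^{2s\va}$ cannot be absorbed by a velocity-pointwise Cauchy--Schwarz because the ratio of weights between cells is $e^{O(s)}$, but you then only assert that the coupled energy estimate ``must be reconciled'' with the weighted estimate ``without reintroducing the factor $e^{O(s)}$''; no mechanism is given, and the one you gesture at does not close. If the cross-cell collision term is moved to the right-hand side and bounded by $e^{Cs}\int_Q\sum_i|\pp_tu_i|^2$ with the energy estimate, the resulting contribution is $e^{2sr_{\rm max}}\|f\|^2$ with $r_{\rm max}\ge r_{\rm min}>r_1$; unlike the commutator term, whose time support forces the exponent below $r_0<r_1$, this is \emph{not} dominated by the left-hand side $s\,e^{2sr_1}\|f\|^2$ as $s\to\infty$, so the absorption fails. (Nor can you first fix $s=s_0$ and then reconcile: the absorptions of the $\|f\|^2$ terms require $s\to\infty$ before $s$ is fixed.) The paper takes a different route: the full coupled collision operator is kept inside $\mathcal{P}_i$, and the Carleman estimate of Proposition \ref{carlemans} is stated and proved directly for the coupled operator by rewriting Lemma \ref{carleman0} with $\mathcal{P}_j$ in place of $\widetilde{\mathcal{P}}_j$ and summing over $j$; consequently, in Section \ref{proof} the scattering term never appears on the right-hand side at all, and the energy estimate of Section \ref{energy} is used only for the commutator term and for the reverse inequality (\ref{invsource2}). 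To complete your argument you must either prove the coupled Carleman estimate with the cross-cell coupling included, or supply an actual absorption argument for that term; as written, this step---the heart of the removal of hypothesis (\ref{1.5})---is missing.
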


The proof of Theorem 2.1 is reduced to Theorem 2.5 by 
substituting (\ref{subtr1}) - (\ref{subtr3}) in (\ref{invsource1}) 
and (\ref{invsource2}).  Thus it suffices to prove Theorem \ref{invsource}.
The proof of Theorem \ref{invsource} relies on Lemma \ref{carlemans}, which 
is a Carleman estimate proved in Section 4.

\section{Coupled radiative transport equations}
\label{coupled}

We recall (\ref{2.1}).
Then we can construct mappings $\mathcal{R}_j:V_0\to V_j$ ($j=1,\dots,m-1$) as
\eq{
\mathcal{R}_jv
&=\mathcal{R}_j(r,\theta,\phi_1,\dots,\phi_{n-2})
\\
&=
\left(r,\theta+\frac{2\pi(l_0-1)}{L_0},\phi_1+\frac{\pi(l_1-1)}{L_1},\dots,
\phi_{n-2}+\frac{\pi(l_{n-2}-1)}{L_{n-2}}\right).
}
We define $\mathcal{R}_0=1$. Then for $v\in V_0$,
\eq{
\mathcal{R}_1v\in V_1,\quad\mathcal{R}_2v\in V_2,\dots,\quad
\mathcal{R}_{m-1}v\in V_{m-1},\quad\mathcal{R}_mv\in V_0.
}

Let us define
\begin{equation}\label{3.1}
u_i(x,v,t)=u(x,\mathcal{R}_iv,t),\quad v\in V_0,\quad i=0,\dots,m-1.
\end{equation}
We introduce
\eq{
P_iu_i(x,v,t)=\pp_tu_i(x,v,t)+w_i\cdot\nabla u_i(x,v,t),
}
where
\[
w_i=\mathcal{R}_iv.
\]
Let us define
\eq{
\Gamma_{\pm}^i=\{(x,v)\in\partial\Omega\times V_0;\;
(\pm\nu(x)\cdot\mathcal{R}_iv) > 0\}.
}
Furthermore we define $a_i(x,v)=a(x,\mathcal{R}_iv)$, $g_i(x,v,t)=g(x,\mathcal{R}_iv,t)$, and
\eq{
\sigma_i(x,v)=\sigma(x,\mathcal{R}_iv),\quad
k_{ij}(x,v,v')=k(x,\mathcal{R}_iv,\mathcal{R}_jv),\quad
x\in\Omega,\quad v,v'\in V_0.
}
Thus (\ref{subtracted}) can be rewritten as
\al{
\label{general}
\left\{\begin{aligned}
&
P_iu_i(x,v,t)+\sigma_i(x,v)u_i(x,v,t)-
\int_{V_0}\sum_{j=0}^{m-1}k_{ij}(x,v,v')u_j(x,v',t)\,dv'=F_i(x,v,t),
\\
&\quad x\in\Omega,\; v\in V_0,\; 0<t<T,
\\
&
u_i(x,v,0)=a_i(x,v),
\quad x\in\Omega,\;v\in V_0,
\\
&
u_i(x,v,t)=g_i(x,v,t)
\quad\mbox{on}\;\Gamma_-^i\times(0,T)
\end{aligned}\right.
}
for $i=0,1,\dots,m-1$. Here we defined
\eq{
F_i(x,v,t)=f_i(x,v)R_i(x,v,t),\quad
f_i(x,v)=f(x,\mathcal{R}_iv),\quad
R_i(x,v,t)=R(x,\mathcal{R}_iv,t).
}

\section{Key Carleman estimate for coupled equations}
\label{carleman}

Let us introduce weight functions as
\eq{
\va_j(x,t)=(\gamma_j\cdot x)-\beta t,
\quad j=0,1,\dots,m-1,
}
where
\al{
0<\beta<\min_{0\le j\le m-1}\min_{v\in\overline{V_j}}(\gamma_j\cdot v).
\label{beta}
}
We set
\eq{
\mathcal{P}_iu_i(x,v,t)=P_iu_i(x,v,t)+\sigma_i(x,v)u_i(x,v,t)-
\int_{V_0}\sum_{j=0}^{m-1}k_{ij}(x,v,v')u_j(x,v',t)\,dv'
}
for $i=0,1,\dots,m-1$. Then the following inequality holds.

\begin{prop}
\label{carlemans}
We assume that $\sigma_{ij}\in L^{\infty}(\Omega\times V_0)$, $k_{ij}\in L^{\infty}(\Omega\times V_0\times V_0)$ ($i,j=0,\dots,m-1$). Furthermore we assume $u_i\in H^1(0,T;L^2(\Omega\times V_0))$, $\nabla u_i\in L^2(\Omega\times V_0\times(0,T))$, $i=0,\dots,m-1$. Suppose $u_i(\cdot,\cdot,T)=0$ in $\overline{\Omega\times V_0}$. Then there exist constants $s_0>0$ and $C>0$ such that
\al{
&
s\int_{V_0}\int_{\Omega}\sum_{i=0}^{m-1}|u_i(x,v,0)|^2e^{2s\va_i(x,0)}\,dxdv+
s^2\int_0^T\int_{V_0}\int_{\Omega}\sum_{i=0}^{m-1}|u_i|^2e^{2s\va_i}\,dxdvdt
\nonumber \\
&\le
C\int_0^T\int_{V_0}\int_{\Omega}\sum_{i=0}^{m-1}|\mathcal{P}_iu_i|^2e^{2s\va_i}
\,dxdvdt+
s\int_0^T\sum_{i=0}^{m-1}\int_{\Gamma_+^i}(w_i\cdot \nu)|u_i|^2e^{2s\va_i}
\,dSdvdt
\label{carleq}
}
for $s\ge s_0$.
\end{prop}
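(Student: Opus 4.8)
The plan is to establish \eqref{carleq} by the classical multiplier (energy) method applied to each conjugated transport operator, the essential novelty lying in the $v$-dependence of the weights $\va_i$. \emph{First}, I would set $z_i=e^{s\va_i}u_i$. Since $\va_i(x,t)=\gamma_i\cdot x-\beta t$ is affine in $(x,t)$, conjugation gives $e^{s\va_i}P_iu_i=P_iz_i-s(P_i\va_i)z_i$, where $P_i\va_i=\gamma_i\cdot w_i-\beta$ with $w_i=\mathcal{R}_iv\in\overline{V_i}$. The decisive point is the uniform positivity
\[ P_i\va_i\ge c_0:=\min_{0\le j\le m-1}\min_{v\in\overline{V_j}}(\gamma_j\cdot v)-\beta>0, \]
which holds exactly because the partition \eqref{2.1} is chosen so that $\gamma_i\cdot w>0$ on $\overline{V_i}$ and $\beta$ is taken below this minimum by \eqref{beta}; the same expression is bounded above by some $C_1$. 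This sign condition plays the role of pseudoconvexity and is what makes a linear (here piecewise-linear in $v$) weight admissible. Note that neither \eqref{largeT} nor \eqref{nonzeroinit} is needed for this proposition; they enter only in the later stability argument.

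\emph{Second}, I would multiply the $i$-th equation $\mathcal{P}_iu_i=:G_i$ by $e^{2s\va_i}u_i$, integrate over $\Omega\times V_0\times(0,T)$, and integrate by parts. On the transport part, $\int e^{2s\va_i}(P_iu_i)u_i=\tfrac12\int e^{2s\va_i}(\pp_t+w_i\cdot\nabla)u_i^2$; moving the field onto the weight produces the coercive bulk term $-s\int (P_i\va_i)z_i^2$, the initial term $-\tfrac12\int_{V_0}\int_\Omega z_i(x,v,0)^2$ (the $t=T$ contribution dropping by $u_i(\cdot,\cdot,T)=0$), and the lateral term $\tfrac12\int_0^T\int_{V_0}\int_{\pp\Omega}(w_i\cdot\nu)z_i^2$. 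Splitting $\pp\Omega$ into $\Gamma_+^i$ and $\Gamma_-^i$, the $\Gamma_-^i$ part has the favorable sign $w_i\cdot\nu<0$ and is discarded, leaving exactly the $\Gamma_+^i$ boundary term of \eqref{carleq}. After discarding that nonpositive integral the energy identity gives, for each $i$,
\[ s\int (P_i\va_i)z_i^2+\tfrac12\int z_i(\cdot,\cdot,0)^2\le\tfrac12\int_{\Gamma_+^i}(w_i\cdot\nu)z_i^2+\int\sigma_iz_i^2-\int e^{2s\va_i}G_iu_i-K_i, \]
where $K_i=\int_0^T\int_{V_0}\int_\Omega e^{2s\va_i}\big(\int_{V_0}\sum_{j}k_{ij}u_j\,dv'\big)u_i$. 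Estimating the data pairing by Young, $|\int e^{s\va_i}G_iz_i|\le\frac{1}{2sc_0}\int e^{2s\va_i}|\mathcal{P}_iu_i|^2+\frac{s}{2}\int(P_i\va_i)z_i^2$, absorbing the second summand on the left, then multiplying by $2s$ and summing over $i$, I would obtain precisely the left-hand side of \eqref{carleq} together with $\frac{1}{c_0}\sum_i\int e^{2s\va_i}|\mathcal{P}_iu_i|^2$ and $s\sum_i\int_{\Gamma_+^i}(w_i\cdot\nu)z_i^2$ on the right, up to the two remaining contributions coming from $\int\sigma_iz_i^2$ and from $K_i$.

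The diagonal zeroth-order term is harmless: $2s\sum_i\int\sigma_iz_i^2\le 2sM\sum_i\int z_i^2$ is swallowed by $s^2c_0\sum_i\int z_i^2$ once $s\ge s_0$. \emph{The genuine obstacle — and where I expect essentially all the work to lie — is the nonlocal coupling $K=\sum_iK_i$.} Substituting $u_i=e^{-s\va_i}z_i$, $u_j=e^{-s\va_j}z_j$ turns each contribution into $\int e^{s(\va_i-\va_j)}k_{ij}z_j(v')z_i(v)$, so a cross-weight factor $e^{s(\va_i-\va_j)}=e^{s(\gamma_i-\gamma_j)\cdot x}$ appears. In the single-weight setting of \cite{Machida-Yamamoto14} the weight is independent of $v$, this factor equals $1$, and the coupling is absorbed immediately; here, by contrast, $(\gamma_i-\gamma_j)\cdot x$ changes sign over $\overline{\Omega}$ and a naive bound only yields $e^{sD}$ with $D=\max_{i,j}\max_{\overline{\Omega}}(\gamma_i-\gamma_j)\cdot x\ge0$.

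Controlling $K$ without this exponential loss is therefore the crux. A first attempt is to distribute the weight by Young's inequality and re-sum over the finitely many ordered pairs $(i,j)$, charging half of each cross term to the diagonal; but the core difficulty is precisely that the diagonal gain is polynomial in $s$ while the off-diagonal factor is a priori $e^{sD}$, so the two are not directly compatible. Making them compatible is exactly where the coordinated placement of the $\gamma_j$ relative to the cells $V_j$ of the piecewise-linear weight must be exploited decisively, and this is the single step I would expect to demand the most care; the remainder is the routine transport energy computation sketched above.
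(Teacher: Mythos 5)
Your multiplier computation is, up to presentation, the paper's own argument for the single-cell estimate: the paper sets $z=e^{s\va_j}u$, writes $Lz=P_0z-sBz$ with $B=\gamma_j\cdot v-\beta>0$ by \eqref{beta}, and expands $\int|Lz|^2\ge -2s\int Bz\,P_0z+s^2\int B^2z^2$, which after integration by parts produces exactly your bulk term, your $t=0$ term and your $\Gamma_+$ boundary term (this is Lemma \ref{carleman0}); your first-order multiplier identity followed by multiplication by $2s$ yields the same three terms with the same powers of $s$, your $c_0$ is the paper's lower bound for $B$, and your absorption of the diagonal zeroth-order term is how the paper treats $\sigma$. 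Your observation that neither \eqref{largeT} nor the positivity of the initial data enters here is also correct.

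The obstacle you flag at the end is genuine, and your proposal does not close it: as written it proves \eqref{carleq} only when the kernel does not couple different cells. You should know, however, that the paper's proof does not close it either. Lemma \ref{carleman0} is stated and proved for $\widetilde{\mathcal{P}}_j$, whose kernel integral runs over $V_j$ only, so that $u(x,v,t)$ and $u(x,v',t)$ both carry the weight $e^{2s\va_j(x,t)}$ (independent of $v$) and absorption into the $s^2$ term is immediate; the passage to the fully coupled operator $\mathcal{P}_j$, which contains $\sum_{l\ne j}k_{jl}u_l$, is dispatched in one sentence (``we can rewrite the inequality \dots by summing up \dots we obtain \eqref{carleq}''), and it is precisely there that the cross-weight factor $e^{2s(\va_j-\va_l)}=e^{2s(\gamma_j-\gamma_l)\cdot x}$ appears. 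Since $(\gamma_j-\gamma_l)\cdot x$ cannot be $\le 0$ on $\overline\Omega$ for every ordered pair unless all the $\gamma_j$ coincide, a pointwise comparison of the weights costs a factor $e^{csD}$ with some $D>0$, which the polynomial gain $s^2$ cannot beat. So your diagnosis locates the one step that is actually nontrivial, and neither your sketch nor the paper supplies the missing argument; completing the proof requires either an extra hypothesis (for instance smallness of the off-diagonal kernels $k_{jl}$, $l\ne j$, or smallness of $\mathrm{diam}\,\Omega$ relative to $s_0$ and the spread of the $\gamma_j$) or a different device for the nonlocal term, and the remainder should not be presented as routine until that is settled.
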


Proposition \ref{carlemans} is proved using Lemma \ref{carleman0} below. 
For a fixed $j \in \{ 0,\dots,m-1\}$, we define
\eq{
\widetilde{\mathcal{P}}_j=P_0u+\sigma u-\int_{V_j}k(x,v,v')u(x,v',t)\,dv',\quad
x\in\Omega,\;v\in V_j,\;t\in(0,T),
}
where $\sigma\in L^{\infty}(\Omega\times V_j)$, $k\in L^{\infty}(\Omega\times V_j\times V_j)$. Furthermore we set
\eq{
Q=\Omega\times(0,T).
}
The following Carleman estimate is obtained in \cite{Machida-Yamamoto14}.

\begin{lem}
\label{carleman0}
For a fixed $j \in \{0,\dots,m-1\}$, 
there exist constants $s_0>0$ and $C>0$ such that
\eq{
&
s\int_{V_j}\int_{\Omega}|u(x,v,0)|^2e^{2s\va_j(x,0)}\,dxdv+
s^2\int_Q\int_{V_j}|u(x,v,t)|^2e^{2s\va_j(x,v)}\,dxdvdt
\\
&\le
C\int_Q\int_{V_j}|\widetilde{\mathcal{P}}_ju|^2e^{2s\va_j(x,v)}\,dxdvdt
+ s\int_0^T\int_{V_j}\int_{\pp\Omega \cap \{(v\cdot\nu)>0\}}
(v\cdot \nu)|u|^2e^{2s\va_j(x,v)}\,dSdvdt
}
for all $s\ge s_0$ and $u\in H^1(0,T;L^2(\Omega\times V))$ satisfying $\nabla u\in L^2(\Omega\times V\times (0,T))$ and $u(\cdot,\cdot,T)=0$ in $\Omega\times V_j$.
\end{lem}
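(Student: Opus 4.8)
The plan is to prove \ref{carleman0} by the classical conjugation-plus-energy-identity method, exploiting two features: the weight $\va_j(x,t)=(\gamma_j\cdot x)-\beta t$ carries no $v$-dependence, and $P_0$ is a first-order transport operator, so no second-order pseudoconvexity machinery is needed. First I would substitute $w=e^{s\va_j}u$ and conjugate the principal part. Since $P_0\va_j=(\gamma_j\cdot v)-\beta=:d_j$, a direct computation gives $e^{s\va_j}P_0u=P_0w-sd_jw$, and the hypothesis \eqref{beta} yields the uniform bound $0<\delta\le d_j\le D$ on $V_j$ with $\delta=\min_{v\in\overline{V_j}}(\gamma_j\cdot v)-\beta$. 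This strict positivity is the entire source of the estimate. I would then expand
\[
\|P_0w-sd_jw\|^2=\|P_0w\|^2+s^2\|d_jw\|^2-2s\langle d_jw,\,P_0w\rangle
\]
in $L^2(Q\times V_j)$ and evaluate the cross term by integration by parts. Because $d_j$ depends only on $v$, writing $w\,\pp_tw=\tfrac12\pp_t(w^2)$ and $w\,(v\cdot\nabla w)=\tfrac12\,v\cdot\nabla(w^2)$ and invoking $w(\cdot,\cdot,T)=0$ produces exactly the initial term $s\int_{V_j}\int_\Omega d_j|w(x,v,0)|^2$ and the boundary term $-s\int_0^T\int_{V_j}\int_{\pp\Omega}d_j(v\cdot\nu)|w|^2$.

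Dropping the nonnegative $\|P_0w\|^2$ and discarding the favorable inflow part $\{v\cdot\nu<0\}$ of the boundary integral, and using $\delta\le d_j\le D$, I obtain
\[
\delta^2s^2\!\int_Q\!\int_{V_j}\!|w|^2+\delta s\!\int_\Omega\!\int_{V_j}\!|w(x,v,0)|^2\le\|P_0w-sd_jw\|^2+Ds\!\int_0^T\!\int_{V_j}\!\int_{\pp\Omega\cap\{v\cdot\nu>0\}}\!(v\cdot\nu)|w|^2.
\]
Next I would reinstate the lower-order terms. From $\widetilde{\mathcal{P}}_ju=P_0u+\sigma u-\int_{V_j}k\,u'\,dv'$ and the $v$-independence of the weight, conjugation gives $P_0w-sd_jw=e^{s\va_j}\widetilde{\mathcal{P}}_ju-\sigma w+\int_{V_j}k(x,v,v')w(x,v',t)\,dv'$. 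The crucial point is that the nonlocal scattering term conjugates with no exponential mismatch, so by Cauchy--Schwarz its $L^2(Q\times V_j)$-norm is bounded by $\|k\|_\infty|V_j|\,\|w\|_{L^2(Q\times V_j)}$, while $\|\sigma w\|\le\|\sigma\|_\infty\|w\|$. Inserting this and choosing $s\ge s_0$ large enough to absorb these $O(\|w\|)$ contributions into the dominant $\delta^2s^2\|w\|^2$ on the left, I would be left with the right-hand side controlled by $C\int_Q\int_{V_j}|e^{s\va_j}\widetilde{\mathcal{P}}_ju|^2$ plus the outflow boundary term. Undoing the substitution $w=e^{s\va_j}u$ then reproduces the claimed inequality (reading the printed weight $e^{2s\va_j(x,v)}$ as $e^{2s\va_j(x,t)}$).

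I expect the main obstacle to be not any single inequality but the justification of the integration by parts under the stated low regularity ($u\in H^1(0,T;L^2)$ with only $\nabla u\in L^2$), which requires a density/approximation argument and care that the boundary traces on $\pp\Omega\times V_j\times(0,T)$ and the time slices $t=0,T$ are well defined. The substantive structural observation, rather than a difficulty, is that because $\va_j$ has no $v$-dependence the scattering term is a genuinely \emph{bounded} perturbation; this is precisely what permits a single linear weight to work on the whole sector $V_j$ and is exactly what the partition in \eqref{2.1} is designed to secure.
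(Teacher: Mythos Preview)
Your proposal is correct and follows essentially the same route as the paper: conjugate by $e^{s\va_j}$, expand the square of the conjugated transport operator, integrate the cross term by parts using $u(\cdot,\cdot,T)=0$ to produce the initial and boundary terms, drop the inflow contribution, and then absorb the lower-order $\sigma$ and scattering terms for large $s$ (the paper dismisses this last step as ``straightforward''). Your version is in fact more explicit than the paper's on two points---the uniform bounds $\delta\le d_j\le D$ and the observation that the $v$-independence of $\va_j$ makes the scattering term conjugate without exponential mismatch---both of which the paper uses implicitly.
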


\begin{proof}
In the proof below we write $\va(x,t)=\va_j(x,t)$. See also \cite{Machida-Yamamoto14}.

Let us set $z(x,v,t)=e^{s\va(x,t)}u(x,v,t)$ and $Lz(x,v,t)=e^{s\va(x,t)}P_0(e^{-s\va(x,t)}z(x,v,t))$. That is, we have
\eq{
Lz(x,v,t)=P_0z(x,v,t)-sBz(x,v,t),
}
where
\eq{
B=(\pp_t+v\nabla)\va(x,t)=-\beta+v\cdot\gamma_j>0.
}
We note that
\eq{
\int_Q|P_0u(x,v,t)|^2e^{2s\va(x,t)}\,dxdt=\int_Q|Lz(x,v,t)|^2\,dxdt,\quad
v\in V_j.
}
The following calculation holds for almost all $v\in V_j$.
\eq{
&
\int_Q|P_0u|^2e^{2s\va}\,dxdt
\\
&=
\int_Q|\pp_tz+v\cdot\nabla z|^2\,dxdt+\int_Q|sB|^2z^2\,dxdt
-2s\int_QBz(\pp_tz+v\cdot\nabla z)\,dxdt
\\
&\ge
-2s\int_QBz(\pp_tz+v\cdot\nabla z)\,dxdt+s^2\int_QB^2z^2\,dxdt
\\
&=
-sB\int_Q\left(\pp_t(z^2)+v\cdot\nabla(z^2)\right)\,dxdt+s^2B^2\int_Qz^2\,dxdt
\\
&=
sB\int_{\Omega}|z(x,v,0)|^2\,dx-sB\int_0^T\int_{\pp\Omega}(v\cdot\nu)z^2\,dSdt
+s^2B^2\int_Qz^2\,dxdt
\\
&\ge
sB\int_{\Omega}|z(x,v,0)|^2\,dx
- sB\int_0^T\int_{\pp\Omega\cap\{(v\cdot\nu(x))>0\}}(v\cdot\nu)z^2\,dSdt
+s^2B^2\int_Qz^2\,dxdt.
}
By substituting $z=e^{s\va}u$ and integrating over $v$ in the above inequality, we obtain 
\eq{
&
C\int_Q\int_{V_j}|P_0u|^2e^{2s\va}\,dvdxdt
\ge
s\int_{\Omega}\int_{V_j}|u(x,v,0)|^2e^{2s\va(x,0)}\,dx
\\
&-
s\int_0^T\int_{\pp\Omega\cap \{(v\cdot\nu(x))>0\}}\int_{V_j}(v\cdot\nu)
|u|^2e^{2s\va}\,dvdSdt
+s^2\int_Q\int_{V_j}|u|^2e^{2s\va}\,dvdxdt.
}
It is straightforward to replace $|P_0u|^2$ in the above inequality with $|\widetilde{\mathcal{P}}_ju|^2$ and the proof is complete.
\end{proof}

We can rewrite the inequality in Lemma \ref{carleman0} as
\eq{
&
s\int_{V_0}\int_{\Omega}|u_j(x,v,0)|^2e^{2s\va_j(x,0)}\,dxdv+
s^2\int_Q\int_{V_0}|u_j(x,v,t)|^2e^{2s\va_j(x,v)}\,dxdvdt
\\
&\le
C\int_Q\int_{V_0}|\mathcal{P}_ju_j|^2e^{2s\va_j(x,v)}\,dxdvdt+
s\int_0^T\int_{\Gamma_+^j}(v\cdot \nu)|u_j|^2e^{2s\va_j(x,v)}\,dSdvdt.
}
By summing up the inequalities from $j=0$ through $j=m-1$, we obtain 
the inequality (\ref{carleq}). Thus Proposition \ref{carlemans} is proved.

\section{Energy estimates}
\label{energy}

Henceforth in this section, $C>0$ denotes generic constants which 
are independent of $f_i$.

\begin{lem}
\label{energy1}
The following inequalities hold for the solutions $u_i(x,v,t)$,
$i=0,\dots,m-1$, which satisfy (\ref{general}).
\al{
&
\int_{\Omega}\int_{V_0}\sum_{i=0}^{m-1}|\pp_tu_i(x,v,t)|^2\,dvdx
\nonumber \\
&\le
C\sum_{i=0}^{m-1}\left(\|f_i\|_{L^2(\Omega\times V_0)}^2+
\|a_i\|_{L^2(\Omega\times V_0)}^2+
\|\nabla a_i\|_{L^2(\Omega\times V_0)}^2\right)
\nonumber \\
&+
C\int_0^T\sum_{i=0}^{m-1}\int_{\Gamma_-^i}|(w_i\cdot\nu)||\pp_tu_i|^2\,dSdvdt
\label{eneq1}
}
and
\al{
&
\int_0^T\sum_{i=0}^{m-1}\int_{\Gamma_+^i}(w_i\cdot\nu)|\pp_tu_i|^2\,dSdvdt
\le
\int_0^T\sum_{i=0}^{m-1}\int_{\Gamma_-^i}|(w_i\cdot\nu)||\pp_tu_i|^2\,dSdvdt
\nonumber \\
&+
\sum_{i=0}^{m-1}\left(\|f_i\|_{L^2(\Omega\times V_0)}^2+
\|a_i\|_{L^2(\Omega\times V_0)}^2+
\|\nabla a_i\|_{L^2(\Omega\times V_0)}^2\right).
\label{eneq2}
}
\end{lem}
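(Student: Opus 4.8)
The plan is to run the classical energy method for the first-order (transport) system, but applied to the time-differentiated equations. Since $\sigma_i$ and $k_{ij}$ do not depend on $t$ and $F_i=f_iR_i$, setting $p_i=\pp_tu_i$ and differentiating (\ref{general}) in $t$ gives a system of the same structure,
\[
P_ip_i+\sigma_ip_i-\int_{V_0}\sum_{j=0}^{m-1}k_{ij}(x,v,v')p_j(x,v',t)\,dv'=f_i(x,v)\pp_tR_i(x,v,t),
\]
for $i=0,\dots,m-1$. This differentiation is legitimate because $u\in\mathcal{U}\subset X$ carries $H^2$-regularity in $t$, and $R,\pp_tR$ lie in $L^2(0,T;L^{\infty})$.

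First I would derive the energy identity. Multiplying the $p_i$-equation by $p_i$, integrating over $\Omega\times V_0$ and summing over $i$, the term $(\pp_tp_i)p_i=\tfrac12\pp_t|p_i|^2$ produces $\tfrac12E'(t)$ with $E(t)=\int_{\Omega}\int_{V_0}\sum_{i=0}^{m-1}|p_i|^2\,dvdx$. Because $w_i=\mathcal{R}_iv$ is constant in $x$, the transport term integrates to a boundary flux, $\int_{\Omega}(w_i\cdot\nabla p_i)p_i\,dx=\tfrac12\int_{\pp\Omega}(w_i\cdot\nu)|p_i|^2\,dS$, whose restriction to $\Gamma_+^i$ (where $w_i\cdot\nu>0$) is nonnegative and is kept on the left, while its restriction to $\Gamma_-^i$ is moved to the right. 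The zeroth-order term is controlled by $|\int\sigma_ip_i^2|\le M\,E(t)$, the coupling term by $CE(t)$ via Cauchy--Schwarz and $\|k_{ij}\|_{L^{\infty}}\le M$, and the source term by $\tfrac12E(t)+\tfrac12\sum_i\|\pp_tR_i(\cdot,\cdot,t)\|_{L^{\infty}}^2\|f_i\|_{L^2(\Omega\times V_0)}^2$. This yields the differential inequality $\tfrac12E'(t)+\tfrac12\sum_i\int_{\Gamma_+^i}(w_i\cdot\nu)|p_i|^2\,dSdv\le\tfrac12I(t)+CE(t)+C\sum_i\|\pp_tR_i(\cdot,\cdot,t)\|_{L^{\infty}}^2\|f_i\|^2$, where $I(t)=\sum_{i}\int_{\Gamma_-^i}|(w_i\cdot\nu)|\,|p_i|^2\,dSdv$ is the incoming boundary flux.

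Next I would supply the initial value $E(0)$. Evaluating (\ref{general}) at $t=0$ and solving for $\pp_tu_i(\cdot,\cdot,0)$ gives $p_i(x,v,0)=-w_i\cdot\nabla a_i-\sigma_ia_i+\int_{V_0}\sum_jk_{ij}a_j+f_iR_i(x,v,0)$, where $R_i(\cdot,\cdot,0)\in L^{\infty}$ is controlled through the embedding $H^1(0,T)\hookrightarrow C[0,T]$ by $\|R_i\|_{L^2(0,T;L^{\infty})}+\|\pp_tR_i\|_{L^2(0,T;L^{\infty})}$; hence $E(0)\le C\sum_i(\|f_i\|^2+\|a_i\|^2+\|\nabla a_i\|^2)$. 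Dropping the nonnegative outgoing flux and applying Gronwall's inequality to the differential inequality on $(0,T)$, together with $\int_0^T\|\pp_tR_i\|_{L^{\infty}}^2\,dt<\infty$, produces (\ref{eneq1}) with constant of order $e^{CT}$. For (\ref{eneq2}) I would instead integrate the energy identity over $(0,T)$, keep $E(T)\ge0$ on the left so that it may be discarded, bound the resulting $\int_0^TE(t)\,dt$ by (\ref{eneq1}), and thereby convert the time-integrated outgoing flux into the incoming flux plus the $f_i,a_i,\nabla a_i$ contributions.

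The main technical obstacle will be the rigorous justification of the boundary trace integrals: one must verify that the traces of $p_i$ on $\Gamma_\pm^i$ are well defined and that the divergence-theorem step leading to the flux term is valid under the available regularity, namely $p_i\in L^2$ with $w_i\cdot\nabla p_i\in L^2$. The coupling across distinct indices through $k_{ij}$ is the feature genuinely absent from the single-equation case, but after summation over $i$ it is absorbed into $CE(t)$ without difficulty; similarly, the time-differentiation and the pointwise-in-$t$ representation of $p_i(\cdot,\cdot,0)$ require only the stated Sobolev regularity and the compatibility of the initial and boundary data.
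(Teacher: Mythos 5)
Your proposal is correct and follows essentially the same route as the paper: differentiate the system in $t$, multiply by $\pp_t u_i$, integrate over $\Omega\times V_0$ to get an energy identity with the boundary flux split over $\Gamma_\pm^i$, bound $E(0)$ from the equation at $t=0$, and apply Gronwall for (\ref{eneq1}) and the integrated identity for (\ref{eneq2}). The only (harmless) differences are that you use the differential rather than the integrated form of Gronwall and are slightly more careful about the $L^2(0,T;L^\infty)$ norm of $\pp_t R_i$ and the trace justification.
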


\begin{proof}

We differentiate the coupled transport equation in (\ref{general}) with respect to $t$ and obtain
\eq{
\pp_t(\pp_tu_i)+w_i\cdot\nabla(\pp_tu_i)+\sigma_i(\pp_tu_i)
-\int_{V_0}\sum_{j=0}^{m-1}k_{ij}(\pp_tu_j)\,dv'=
f_i\pp_tR_i.
}
By multiplying $2\pp_tu_i$, we have
\eq{
\pp_t(\pp_tu_i)^2+w_i\cdot\nabla (\pp_tu_i)^2+2\sigma_i(\pp_tu_i)^2-
2(\pp_tu_i)\int_{V_0}\sum_{j=0}^{m-1}k_{ij}(\pp_tu_j(x,v',t))\,dv'
\\
=2(\pp_tu_i)f_i\pp_tR_i.
}
By integrating over $\Omega\times V_0$, we obtain
\eq{
&
\pp_t\int_{\Omega}\int_{V_0}|\pp_tu_i|^2\,dvdx
+\int_{\Omega}\int_{V_0} w_i\cdot\nabla(|\pp_tu_i|^2)\,dvdx
+2\int_{\Omega}\int_{V_0}\sigma_i|\pp_tu_i|^2\,dvdx
\\
&-
2\int_{\Omega}\int_{V_0}(\pp_tu_i(x,v,t))\left(\int_{V_0}\sum_{j=0}^{m-1}
k_{ij}(x,v,v')\pp_tu_j(x,v',t)\,dv'\right)\,dvdx
\\
&=
2\int_{\Omega}\int_{V_0}(\pp_tu_i)f_i(\pp_tR_i)\,dvdx.
}
Setting $E(t)=\int_{\Omega}\int_{V_0}\sum_{i=0}^{m-1}|\pp_tu_i(x,v,t)|^2\,dvdx$
and integrating the second term on the left-hand side, we obtain
\eq{
&
\pp_tE(t)=
-\int_{\pp\Omega}\int_{V_0}\sum_{i=0}^{m-1}(w_i\cdot\nu)|\pp_tu_i|^2\,dvdS
-2\int_{\Omega}\int_{V_0}\sum_{i=0}^{m-1}\sigma_i|\pp_tu_i|^2\,dvdx
\\
&+
2\int_{\Omega}\int_{V_0}\sum_{i=0}^{m-1}(\pp_tu_i(x,v,t))
\left(\int_{V_0}\sum_{j=0}^{m-1}k_{ij}(\pp_tu_j(x,v',t))\,dv'\right)\,dvdx
\\
&+
2\int_{\Omega}\int_{V_0}\sum_{i=0}^{m-1}(\pp_tu_i)f_i(\pp_tR_i)\,dvdx.
}
Then by integrating over $t$, we have
\eq{
&
E(t)-E(0)
\\
&=
-\int_0^t\sum_{i=0}^{m-1}\left(\int_{\Gamma_+^i}+\int_{\Gamma_-^i}\right)
(w_i\cdot\nu)|\pp_tu_i|^2\,dSdvdt
-2\int_0^t\int_{\Omega}\int_{V_0}\sum_{i=0}^{m-1}\sigma_i|\pp_tu_i|^2\,dvdxdt
\\
&+
2\int_0^t\int_{\Omega}\int_{V_0}\sum_{i=0}^{m-1}(\pp_tu_i)
\left(\int_{V_0}\sum_{j=0}^{m-1}k_{ij}(\pp_tu_j(x,v',t))\,dv'\right)\,dvdxdt
\\
&+
2\int_0^t\int_{\Omega}\int_{V_0}\sum_{i=0}^{m-1}(\pp_tu_i)f_i(\pp_tR_i)
\,dvdxdt.
}
We note that by the Cauchy-Schwarz inequality,
\eq{
&
\left|\int_{V_0}\sum_{i=0}^{m-1}\left(\pp_tu_i(x,v,t)\right)
\left(\int_{V_0}\sum_{j=0}^{m-1}k_{ij}(x,v,v')\left(\pp_tu_j(x,v',t)\right)
\,dv'\right)\,dv\right|
\\
&\le
C\int_{V_0}\sum_{i=0}^{m-1}\left|\pp_tu_i(x,v,t)\right|
\left(\int_{V_0}\sum_{j=0}^{m-1}\left|\pp_tu_j(x,v',t)\right|\,dv'\right)\,dv
\\
&\le
C\sum_{i=0}^{m-1}\sum_{j=0}^{m-1}\left(\int_{V_0}\left|\pp_tu_i(x,v,t)\right|^2\,dv\right)^{1/2}
|V_0|^{1/2}\left(\int_{V_0}\left|\pp_tu_j(x,v',t)\right|^2\,dv'\right)^{1/2}|V_0|^{1/2}
\\
&\le
C|V_0|\int_{V_0}\sum_{i=0}^{m-1}\left|\pp_tu_i(x,v,t)\right|^2\,dv,
}
where $|V_0|=\int_{V_0}dv$, and
\eq{
2\int_{\Omega}\int_{V_0}|(\pp_tu_i)f_i(\pp_tR_i)|\,dvdx
&\le
\int_{\Omega}\int_{V_0}|f_i(\pp_tR_i)|^2\,dvdx
+\int_{\Omega}\int_{V_0}|\pp_tu_i|^2\,dvdx
\\
&\le
C\int_{\Omega}\int_{V_0}|f_i|^2\,dvdx+
\int_{\Omega}\int_{V_0}|\pp_tu_i|^2\,dvdx.
}
Hence we obtain for $0\le t\le T$,
\al{
E(t)-E(0)
&\le
-\int_0^t\sum_{i=0}^{m-1}\left(\int_{\Gamma_+^i}+\int_{\Gamma_-^i}\right)
(w_i\cdot\nu)|\pp_tu_i|^2\,dSdvdt
+C\int_0^tE(\eta)\,d\eta
\nonumber \\
&+
C\sum_{i=0}^{m-1}\|f_i\|_{L^2(\Omega\times V_0)}^2.
\label{ene2}
}
We note that from (\ref{general}),
\eq{
\pp_tu_i(x,v,0)+w_i\cdot\nabla a_i(x,v)+\sigma_ia_i(x,v)
-\int_{V_0}\sum_{j=0}^{m-1}k_{ij}a_j(x,v')\,dv'
=f_iR_i(x,v,0),
}
and hence
\eq{
E(0)\le C\sum_{i=0}^{m-1}\left(\|f_i\|_{L^2(\Omega\times V_0)}^2+
\|a_i\|_{L^2(\Omega\times V_0)}^2+\|\nabla a_i\|_{L^2(\Omega\times V_0)}^2
\right).
}
Using the Gronwall inequality, we arrive at
\al{
E(t)
&\le
C\sum_{i=0}^{m-1}\left(-\int_0^t\Biggl(\int_{\Gamma_+^i}+\int_{\Gamma_-^i}\right)(w_i\cdot\nu)|\pp_tu_i|^2\,dSdvdt
\nonumber \\
&+\|f_i\|_{L^2(\Omega\times V_0)}^2+\|a_i\|_{L^2(\Omega\times V_0)}^2
+\|\nabla a_i\|_{L^2(\Omega\times V_0)}^2\Biggr).
\label{ene4}
}
Noting that $\int^t_0\int_{\Gamma_+^i}(w_i\cdot\nu)|\pp_tu_i|^2\,dSdvdt\ge 0$, the first inequality (\ref{eneq1}) in Lemma \ref{energy1} is proved from (\ref{ene4}).

Equation (\ref{ene4}) yields
\eq{
&
\int_0^T\sum_{i=0}^{m-1}\int_{\Gamma_+^i}(w_i\cdot\nu)|\pp_tu_i|^2\,dSdvdt
\\
&\le
-\frac{E(T)}{C}-
\int_0^T\sum_{i=0}^{m-1}\int_{\Gamma_-^i}(w_i\cdot\nu)|\pp_tu_i|^2\,dSdvdt
\\
&+
\sum_{i=0}^{m-1}\left(\|f_i\|_{L^2(\Omega\times V_0)}^2+
\|a_i\|_{L^2(\Omega\times V_0)}^2+
\|\nabla a_i\|_{L^2(\Omega\times V_0)}^2\right)
\\
&\le
-\int_0^T\sum_{i=0}^{m-1}\int_{\Gamma_-^i}(w_i\cdot\nu)|\pp_tu_i|^2\,dSdvdt
\\
&+
\sum_{i=0}^{m-1}\left(\|f_i\|_{L^2(\Omega\times V_0)}^2+
\|a_i\|_{L^2(\Omega\times V_0)}^2+\|\nabla a_i\|_{L^2(\Omega\times V_0)}^2
\right).
}
Thus the second inequality (\ref{eneq2}) in Lemma \ref{energy1} is proved.
\end{proof}

\section{Proof of Theorem \ref{invsource}}
\label{proof}

Hereafter $C>0$ denotes generic constants which are independent of $s>0$.

We set
\eq{
r_{\rm max}=
\max_{0\le j\le m-1}\max_{x\in\overline{\Omega}}(\gamma_j\cdot x),\quad
r_{\rm min}=
\min_{0\le j\le m-1}\min_{x\in\overline{\Omega}}(\gamma_j\cdot x).
}
Since $T$ satisfies
\eq{
T>\frac{r_{\rm max}-r_{\rm min}}
{\min_{0\le j\le m-1}\min_{v\in\overline{V_j}}(\gamma_j\cdot v)},
}
We can choose $\beta$ such that (\ref{beta}) and
\eq{
r_{\rm max}-\beta T<r_{\rm min}.
}
Then we have
\eq{
\va_i(x,T)\le r_{\rm max}-\beta T<r_{\rm min}\le\va_i(x,0),\quad
i=0,\dots,m-1,
\; x\in\overline{\Omega}.
}
Therefore there exist $\delta>0$ and $r_0,r_1$ such that
$r_{\rm max}-\beta T<r_0<r_1<r_{\rm min}$,
\eq{
\va_i(x,t)>r_1,\quad x\in\overline{\Omega},\;0\le t\le\delta,
}
and
\eq{
\va_i(x,t)<r_0,\quad x\in\overline{\Omega},\;T-2\delta\le t\le T
}
for $i=0,1,\dots,m-1$. Let us introduce a cut-off function $\chi\in C^{\infty}_0({\Bbb R})$ such that $0\le\chi\le1$ and
\eq{
\chi(t)=\left\{\begin{aligned}
1,&\quad 0\le t\le T-2\delta,
\\
0,&\quad T-\delta\le t\le T.
\end{aligned}\right.
}

Let us set
\eq{
z_i(x,v,t)=(\pp_tu_i(x,v,t))\chi(t),\quad i=0,\dots,m-1.
}
Then we have $z_i(x,v,T)=0$. By differentiating the equation in (\ref{general}), we obtain
\al{
&
\tilde{P}_iz_i(x,v,t)-\int_{V_0}\sum_{j=0}^{m-1}k_{ij}(x,v,v')z_j(x,v',t)\,dv'=
\chi f_i(\pp_tR_i)+(\pp_t\chi)\pp_tu_i
\label{wrte0}
}
for $(x,t)\in Q$, $v\in V_0$, where
\eq{
\tilde{P}_iu_i(x,v,t)=P_iu_i(x,v,t)+\sigma_iu_i(x,v,t),\quad
(x,t)\in Q,\;v\in V_0.
}
From the coupled radiative transport equation in (\ref{general}), we have at $t=0$,
\al{
z_i(x,v,0)
&=
f_i(x,v)R_i(x,v,0)-w_i\cdot\nabla a_i(x,v)-\sigma_i(x,v)a_i(x,v)
\nonumber \\
&+
\int_{V_0}\sum_{j=0}^{m-1}k_{ij}(x,v,v')a_j(x,v')\,dv',
\quad x\in\Omega,\;v\in V_0.
\label{wk0}
}
Using the Carleman estimate in Theorem \ref{carlemans}, we obtain for $z_i$,
\al{
&
s\int_{V_0}\int_{\Omega}\sum_{i=0}^{m-1}|z_i(x,v,0)|^2e^{2s\va_i(x,0)}\,dxdv
\le
C\int_Q\int_{V_0}\sum_{i=0}^{m-1}\left|\chi f_i(\pp_tR_i)\right|^2e^{2s\va_i(x,t)}\,dvdxdt
\nonumber \\
&+
C\int_Q\int_{V_0}\sum_{i=0}^{m-1}\left|(\pp_t\chi)\pp_tu_i\right|^2
e^{2s\va_i(x,t)}\,dvdxdt
\nonumber \\
&+
Cs\int_0^T\sum_{i=0}^{m-1}\int_{\Gamma_+^i}(w_i\cdot\nu)|z_i|^2e^{2s\va_i(x,t)}
\,dSdvdt.
\label{carlemanw}
}
Let us set
\eq{
d_0=\left(\int^T_0\sum_{i=0}^{m-1}\int_{\Gamma_+^i}(w_i\cdot\nu)|\pp_tu_i|^2
\,dSdvdt\right)^{1/2}.
}
The last term in (\ref{carlemanw}) is estimated as
\eq{
Cs\int_0^T\sum_{i=0}^{m-1}\int_{\Gamma_+^i}(w_i\cdot\nu)|z_i|^2e^{2s\va_i(x,t)}
\,dSdvdt
\le Ce^{C_1s}d_0^2,
}
where we used $se^{Cs}\le e^{(C+1)s}$ for $s>0$ and set $C_1=C+1$. Since $\pp_t\chi=0$ for $0\le t\le T-2\delta$ or $T-\delta\le t \le T$, we have
\al{
\int_Q\int_{V_0}\sum_{i=0}^{m-1}|(\pp_t\chi)\pp_tu_i|^2e^{2s\va_i(x,t)}\,dvdxdt
&=
\int_{T-2\delta}^{T-\delta}\int_{\Omega}\int_{V_0}\sum_{i=0}^{m-1}|
(\pp_t\chi)\pp_tu_i|^2e^{2s\va_i(x,t)}\,dvdxdt
\nonumber \\
&\le
Ce^{2sr_0}\int_{T-2\delta}^{T-\delta}\int_{\Omega}\int_{V_0}\sum_{i=0}^{m-1}
|\pp_tu_i|^2\,dvdxdt.
\label{eq10}
}
Thus from (\ref{eq10}) with the help of (\ref{eneq1}) in Lemma \ref{energy1},
\eq{
&
\int_{\Omega}\int_{V_0}\sum_{i=0}^{m-1}|(\pp_t\chi)\pp_tu_i|^2e^{2s\va}\,dvdxdt
\\
&\le
Ce^{2sr_0}\sum_{i=0}^{m-1}\left(\|f_i\|_{L^2(\Omega\times V_0)}^2+
\|a_i\|_{L^2(\Omega\times V_0)}^2+
\|\nabla a_i\|_{L^2(\Omega\times V_0)}^2\right)
\\
&-
Ce^{2sr_0}\int_0^T\sum_{i=0}^{m-1}\int_{\Gamma_-^i}(w_i\cdot\nu)|\pp_tu_i|^2
\,dSdvdt.
}

Note that (\ref{wk0}) holds for $x\in\overline{\Omega}$, $v\in\overline{V_0}$, and recall the fact that $R_i(\cdot,\cdot,0)\ne 0$ in $\overline{\Omega\times V_0}$. We obtain
\eq{
&
\int_{\Omega}\int_{V_0}\sum_{i=0}^{m-1}|z_i(x,v,0)|^2e^{2s\va_i(x,0)}\,dvdx
+Ce^{Cs}\sum_{i=0}^{m-1}\left(\|a_i\|_{L^2(\Omega\times V_0)}^2+
\|\nabla a_i\|_{L^2(\Omega\times V_0)}^2\right)
\\
&\ge
C\int_{\Omega}\int_{V_0}\sum_{i=0}^{m-1}|f_i(x,v)|^2e^{2s\va_i(x,0)}\,dvdx.
}
Therefore (\ref{carlemanw}) yields
\eq{
&
s\int_{\Omega}\int_{V_0}\sum_{i=0}^{m-1}|f_i(x,v)|^2e^{2s\va_i(x,0)}\,dvdx
\le
C\int_Q\int_{V_0}\sum_{i=0}^{m-1}|f_i(x,v)|^2e^{2s\va_i(x,t)}\,dvdxdt
\\
&+
Ce^{2sr_0}\sum_{i=0}^{m-1}\|f_i\|_{L^2(\Omega\times V_0)}^2
-Ce^{2sr_0}\int_0^T\sum_{i=0}^{m-1}\int_{\Gamma_-^i}(w_i\cdot\nu)|\pp_tu_i|^2
\,dSdvdt
+Ce^{C_1s}d^2,
}
where we defined
\eq{
d=\left(\int_0^T\sum_{i=0}^{m-1}\int_{\Gamma_+^i}(w_i\cdot\nu)|\pp_tu_i|^2
\,dSdvdt\right)^{1/2}+
\sum_{i=0}^{m-1}\left(\|a_i\|_{L^2(\Omega\times V_0)}+\|\nabla a_i\|_{L^2(\Omega\times V_0)}\right).
}
Since $\va_i(x,t)\le\va_i(x,0)$ for $(x,t)\in Q$,
\eq{
&
(s-CT)\int_{\Omega}\int_{V_0}\sum_{i=0}^{m-1}|f_i(x,v)|^2e^{2s\va_i(x,0)}\,dvdx
\le
Ce^{2sr_0}\sum_{i=0}^{m-1}\|f_i\|_{L^2(\Omega\times V_0)}^2
\\
&-
Ce^{2sr_0}\int_0^T\sum_{i=0}^{m-1}\int_{\Gamma_-^i}(w_i\cdot\nu)|\pp_tu_i|^2
\,dSdvdt+Ce^{C_1s}d^2.
}
Noting that $\va_i(x,0)>r_1$, we have
\eq{
&
se^{2sr_1}\int_{\Omega}\int_{V_0}\sum_{i=0}^{m-1}|f_i|^2
\,dvdx
\le
Ce^{2sr_0}\sum_{i=0}^{m-1}\|f_i\|_{L^2(\Omega\times V_0)}^2
\\
&-
Ce^{Cs}\int_0^T\sum_{i=0}^{m-1}\int_{\Gamma_-^i}(w_i\cdot\nu)|\pp_tu_i|^2
\,dSdvdt
+Ce^{C_1s}d^2.
}
Hence, for sufficiently large $s$,
\al{
&
\sum_{i=0}^{m-1}\|f_i\|_{L^2(\Omega\times V_0)}^2
\nonumber \\
&\le
Ce^{-2s(r_1-r_0)}\sum_{i=0}^{m-1}\|f_i\|_{L^2(\Omega\times V_0)}^2-
Ce^{Cs}\int_0^T\sum_{i=0}^{m-1}\int_{\Gamma_-^i}(w_i\cdot\nu)|\pp_tu_i|^2
\,dSdvdt
\nonumber \\
&+
Ce^{Cs}\int_0^T\sum_{i=0}^{m-1}\int_{\Gamma_+^i}(w_i\cdot\nu)
|\pp_tu_i|^2\,dSdvdt
\nonumber \\
&+
Ce^{Cs}\sum_{i=0}^{m-1}\left(\|a_i\|_{L^2(\Omega\times V_0)}^2
+\|\nabla a_i\|_{L^2(\Omega\times V_0)}^2\right).
\label{almost}
}
The first term on the right-hand side of (\ref{almost}) will vanish as $s$ becomes large. We can rewrite (\ref{almost}) as
\eq{
\|f\|_{L^2(\Omega\times V)}^2
&\le
-Ce^{Cs}\int_0^T\int_{\pp\Omega}\int_V|(v\cdot\nu)||\pp_tu|^2\,dvdSdt
\\
&+
Ce^{Cs}\left(\|a\|_{L^2(\Omega\times V)}^2+
\|\nabla a\|_{L^2(\Omega\times V)}^2\right)
}
for sufficiently large $s$. Hence the first inequality in Theorem \ref{invsource} is proved.

The second inequality in Theorem \ref{invsource} is immediately obtained from (\ref{eneq2}) in Lemma \ref{energy1}. Thus the proof is complete.
\qed

\section{Concluding remarks}
\label{concl}

In \cite{Machida-Yamamoto14}, the velocity $v$ must satisfy 
$(v\cdot\gamma) > 0$ 
with some fixed vector $\gamma\in\Rm^n$, for which there are limited 
applications in transport phenomena. 

In the present article, we deleted such an extra assumption and our 
global stability results Theorems 2.1 and 2.5 require only 
the positivity (2.3) of initial values and $R(x,v,0) > 0$ on 
$(x,v) \in \overline{\Omega} \times \overline{V}$ respectively.

Moreover, it is understood that such positivity is essential for the 
methodology by Carleman estimate, in general.
  
With the partition by choosing multiple fixed vectors $\gamma_j$, 
$j=0,\dots,m-1$, which are dependent on $v$, we construct the 
weight functions in the form $\varphi_i(x,t) := (\gamma_j\cdot x) - \beta t$ 
to derive the key Carleman estimate Proposition 4.1.
Such dependence of the weight on $v$  still admits to prove the relevant 
Carleman estimate for $u$.

\section*{Acknowledgements}

The first author acknowledges support from Grant-in-Aid for Scientific Research (C) 17K05572, 18K03438 of Japan Society for the Promotion of Science. 
The second author was supported by Grant-in-Aid for Scientific Research (S)
15H05740 of Japan Society for the Promotion of Science and
by The National Natural Science Foundation of China
(no. 11771270, 91730303).
This work was prepared with the support of the "RUDN University Program 5-100".



\end{document}